\newtheorem{The}{Theorem}[section]
\newtheorem{Prop}[The]{Proposition}
\newtheorem{Question}[The]{Question}
\newtheorem{Ex}[The]{Example}
\newtheorem*{ackn}{Acknowledgements}
\newcommand{\C}{\mathbb{C}}
\newcommand{\R}{\mathbb{R}}
\newcommand{\N}{\mathbb{N}}
\begin{document}
 \title[ Maximal plurisubharmonic function]{Some properties of Maximal plurisubharmonic function}

\setcounter{tocdepth}{1}

  \author{Do Hoang Son } 
\address{Institute of Mathematics \\ Vietnam Academy of Science and Technology \\18
Hoang Quoc Viet \\Hanoi \\Vietnam}
\email{hoangson.do.vn@gmail.com }
\email{dhson@math.ac.vn}
 \date{\today}


\begin{abstract}
  The purpose of this paper is to provide some properties of maximal plurisubharmonic functions in bounded domains in $\C^n$. 
\end{abstract}

\maketitle

\tableofcontents
\newpage

\section*{Introduction}
 Let $\Omega\subset\C^n$ be a bounded domain $(n\geq 2)$. A function $u\in PSH(\Omega)$ is called maximal
 if for every open set $G\Subset\Omega$, and for each upper semicontinuous function $v$ on $\bar{G}$ such that
 $v\in PSH(G)$ and $v|_{\partial G}\leq u|_{\partial G}$, we have $v\leq u$. There are some equivalent descriptions of
 maximality which have been presented in \cite{Sad81}, \cite{Kli91}.
 
 For the convenience, we denote\\
 \begin{flushleft}
 	$MPSH(\Omega)=\{u\in PSH(\Omega)| u \mbox{ is maximal }\}$,\\[12pt]
 	$MPSH_{loc}(\Omega)=\{u\in PSH(\Omega)| \forall z\in\Omega: u\in MPSH(U) \mbox{ for some } z\in U\Subset\Omega \}$.\\[12pt]
 \end{flushleft}
 
 The following question was given by Blocki  \cite{Blo04}, \cite{DGZ16}:
 \begin{Question}\label{bkquest}
 	Is $MPSH (\Omega)$ equal to $MPSH_{loc}(\Omega)$? (or is maximality a local notion?)
 \end{Question}
  Denote by $D(\Omega)$ the domain of definition of Monge-Amp\`ere operator in $\Omega$. By \cite{Blo04},
   a function $u\in D(\Omega)$ is maximal iff $(dd^cu)^n=0$. Moreover, it follows from  \cite{Blo06} that to belong
   to the class $D$ is a local property. Then 
   \begin{center}
   	$MPSH(\Omega)\cap D(\Omega)=MPSH_{loc}(\Omega)\cap D(\Omega)$.
   \end{center}
   In the general case, the question \ref{bkquest} is still open. It raises another question
   \begin{Question}
   	What information can be obtained from the condition $u\in MPSH_{loc}(\Omega)$?
   \end{Question}
   In this paper, we study on some subclass of $MPSH(\Omega)$ and use results on it to show some properties of 
   the class $MPSH_{loc}(\Omega)$.

  Our main results  are following:
  \begin{The}\label{app1}
  	If $u,v\in MPSH_{loc}(\Omega)$ and $\chi: \R\rightarrow\R$ is a convex non-decreasing function then
  	$(z,w)\mapsto\chi (u(z)+v(w))\in MPSH(\Omega\times\Omega)$.
  \end{The}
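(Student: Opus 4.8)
The plan is to reduce maximality of $\Phi(z,w):=\chi(u(z)+v(w))$ to a regular case settled by a direct Monge--Amp\`ere computation, and then to reach the general case by decreasing approximation. Plurisubharmonicity itself is immediate: $(z,w)\mapsto u(z)+v(w)$ is a sum of pullbacks of plurisubharmonic functions under the two coordinate projections, hence plurisubharmonic on $\Omega\times\Omega$, and post-composing with the convex non-decreasing $\chi$ preserves this. So only maximality is at issue.

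For the regular case I would assume first that $u,v$ are smooth (so $\Phi\in C^2\subset D(\Omega\times\Omega)$) and that $\chi$ is smooth. By Blocki's criterion a function in $D$ is maximal iff its Monge--Amp\`ere measure vanishes, and applying the displayed identity $MPSH\cap D=MPSH_{loc}\cap D$ on $\Omega\times\Omega$ (together with the local nature of $D$) it suffices to verify $(dd^c\Phi)^{2n}=0$ pointwise. Writing $f(z,w)=u(z)+v(w)$ we have $dd^cf=dd^c_zu+dd^c_wv$ and
\[ dd^c\Phi=\chi'(f)\,dd^cf+\chi''(f)\,df\wedge d^cf. \]
Since $df\wedge d^cf$ is a non-negative $(1,1)$-form of rank one, its square vanishes, and the two commuting summands above leave only
\[ (dd^c\Phi)^{2n}=(\chi'(f))^{2n}(dd^cf)^{2n}+2n\,(\chi'(f))^{2n-1}\chi''(f)\,(dd^cf)^{2n-1}\wedge df\wedge d^cf. \]
Because $dd^c_zu$ involves only the $z$-variables and $dd^c_wv$ only the $w$-variables, one has $(dd^c_zu)^{n+1}=(dd^c_wv)^{n+1}=0$; hence every surviving term in the binomial expansion of $(dd^cf)^{2n}$ and of $(dd^cf)^{2n-1}$ carries a factor $(dd^c_zu)^n$ or $(dd^c_wv)^n$. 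In the rank-one term the extra factor $df\wedge d^cf$ can only supply differentials in the variable group that is not already saturated by that power, so the saturated factor $(dd^c_zu)^n$ or $(dd^c_wv)^n$ still survives. By maximality of $u$ and $v$ and Blocki's criterion these both vanish, whence $(dd^c\Phi)^{2n}=0$ and $\Phi$ is maximal.

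For the general case I would approximate. One can take smooth convex non-decreasing $\chi_j\downarrow\chi$ (e.g.\ mollifications, which dominate $\chi$ by Jensen and decrease to it) and, on a fixed $\Omega'\Subset\Omega$, decreasing sequences of smooth maximal plurisubharmonic functions $u_j\downarrow u$ and $v_j\downarrow v$; then $\Phi_j=\chi_j(u_j(z)+v_j(w))$ is maximal on $\Omega'\times\Omega'$ by the regular case and $\Phi_j\downarrow\Phi$. A decreasing limit of maximal functions is maximal: if $\psi$ is a competitor on some $G$ with $\psi\le\Phi$ on $\partial G$, then $\psi\le\Phi_j$ on $\partial G$, so $\psi\le\Phi_j$ on $G$ by maximality of $\Phi_j$, and letting $j\to\infty$ gives $\psi\le\Phi$. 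Exhausting $\Omega$ by such $\Omega'$ then yields $\Phi\in MPSH(\Omega\times\Omega)$. The main obstacle is exactly the construction of the decreasing smooth (or at least $D$-class) maximal approximants $u_j,v_j$: naive mollification destroys the identity $(dd^c)^n=0$, so one cannot regularize directly, and it is here that the finer structure of the subclass developed earlier in the paper must be invoked. A secondary point to check with care is the rank-one term in the expansion above, i.e.\ that after wedging with $df\wedge d^cf$ no top-degree contribution escapes the vanishing of the saturated factors.
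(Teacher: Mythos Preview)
Your Monge--Amp\`ere computation in the regular case is correct and is exactly the calculation the paper carries out. The genuine gap is in the globalization step, and it is more serious than the approximant-construction issue you flag. You want, on an arbitrary $\Omega'\Subset\Omega$, a decreasing sequence of maximal (smooth or $D$-class) $u_j\downarrow u$. But if such $u_j$ exist then their decreasing limit $u$ is itself maximal on $\Omega'$; so your scheme presupposes $u\in MPSH(\Omega')$ for every $\Omega'\Subset\Omega$, which is precisely Blocki's open question for $u\in MPSH_{loc}(\Omega)$. If instead you restrict to small $\Omega'$ on which $u$ is known to be maximal, your argument only yields $\chi(u(z)+v(w))\in MPSH_{loc}(\Omega\times\Omega)$, and you are back to the same open problem on the product. ``Exhausting $\Omega$ by such $\Omega'$'' therefore cannot close the argument.

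The paper avoids this circularity by not trying to prove maximality of $\chi(u+v)$ directly. It works instead with the auxiliary $M_1$-property: on small products $U\times V$ (with $u$ maximal near $\bar U$, $v$ maximal near $\bar V$) it builds, via Walsh's Dirichlet solution, \emph{continuous} approximants $\tilde u_j,\tilde v_j$ with $(dd^c\tilde u_j)^n=(dd^c\tilde v_j)^n=0$, and your very computation then gives both $(dd^c(\tilde u_j+\tilde v_j))^{2n}=0$ and the vanishing of the gradient term, so $u(z)+v(w)\in M_1PSH(U\times V)$. The crucial extra ingredient is Theorem~\ref{M1.the}, whose equivalence $(i)\Leftrightarrow(iii)$ shows that the $M_1$-property is a local notion; hence $u(z)+v(w)\in M_1PSH(\Omega\times\Omega)$ globally, and then $(i)\Rightarrow(ii)$ of the same theorem gives $\chi(u+v)\in MPSH(\Omega\times\Omega)$ for every convex non-decreasing $\chi$. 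In short: the decreasing-limit/exhaustion route cannot bypass the local-to-global problem for maximality, whereas routing through $M_1PSH$ does, because that class is provably local.
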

 \begin{The}\label{app2}
 	Let $u$ be a negative maximal plurisubharmonic function in $\Omega$ and let $U, \tilde{U}$ be open subset of
 	$\Omega$ such that $U\Subset\tilde{U}\Subset\Omega$. Assume that $u_j\in PSH^-(\tilde{U})\cap C(\tilde{U})$ is
 	decreasing to $u$ in $\tilde{U}$. Then
 	\begin{equation}\label{app2.eq}
 	\int\limits_U|u_j|^{-a}(dd^cu_j)^n\stackrel{j\to\infty}{\longrightarrow}0,\,\forall a>n-1.
 	\end{equation}
 \end{The}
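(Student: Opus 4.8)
The plan is to use the maximality of $u$ through comparison with the solution of the homogeneous Monge--Amp\`ere equation, and to let the weight $(-u_j)^{-a}$ absorb the only place where the masses $(dd^cu_j)^n$ can concentrate, namely the polar set $\{u=-\infty\}$. Fix a cut-off $\rho\in C_0^\infty(\tilde U)$ with $0\le\rho\le1$ and $\rho\equiv1$ on a neighbourhood of $\overline U$; it suffices to show $\int_{\tilde U}\rho\,(-u_j)^{-a}(dd^cu_j)^n\to0$. On $\tilde U$ let $w_j$ be the maximal plurisubharmonic function with boundary values $u_j|_{\partial\tilde U}$, i.e.\ $(dd^cw_j)^n=0$ and $w_j=u_j$ on $\partial\tilde U$. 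Since $u_j$ is a subsolution, $u_j\le w_j$; the $w_j$ decrease with $j$; and because $u$ is itself maximal with the same boundary data, the comparison principle forces $w_j\downarrow u$. Thus $0\le w_j-u_j\to0$, and this gap is what will drive the integral to zero.

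Using $(dd^cw_j)^n=0$ I would write $(dd^cu_j)^n=dd^c(u_j-w_j)\wedge T_j$ with the positive closed current $T_j=\sum_{k=0}^{n-1}(dd^cu_j)^k\wedge(dd^cw_j)^{n-1-k}$, multiply by the weight $\psi_j:=\rho\,(-u_j)^{-a}$, and integrate by parts twice so that both derivatives fall on $\psi_j$. Because $T_j$ is closed this yields, up to sign,
\[
\int_{\tilde U}\rho\,(-u_j)^{-a}(dd^cu_j)^n=\int_{\tilde U}(w_j-u_j)\,(-dd^c\psi_j)\wedge T_j .
\]
Expanding $dd^c\psi_j$ through $d((-u_j)^{-a})=a(-u_j)^{-a-1}du_j$ produces three kinds of positive contributions: curvature terms with density $(-u_j)^{-a-1}$ against $dd^cu_j\wedge T_j$, gradient terms with density $(-u_j)^{-a-2}$ against $du_j\wedge d^cu_j\wedge T_j$, and an annular remainder supported in $\{d\rho\ne0\}\Subset\tilde U$, where $u$ is bounded. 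The crucial gain is that every bulk term now carries the factor $(w_j-u_j)$.

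The analytic heart is a uniform estimate, valid precisely for $a>n-1$, of the weighted energies obtained after one more integration by parts in the level sets of $u_j$; concretely one shows that $\int_{\tilde U}\rho\,(-u_j)^{-a}\,du_j\wedge d^cu_j\wedge(dd^cu_j)^{n-1}\le C$ and its mixed analogues (with some factors $dd^cu_j$ replaced by $dd^cw_j$) are bounded uniformly in $j$. Near a point of $\{u=-\infty\}$ the model $u\sim c\log|z|$ makes $du\wedge d^cu\wedge(dd^cu)^{n-1}$ homogeneous of the critical order, and integrating $(-\log|z|)^{-a}$ against it converges exactly when $a>n-1$: this Lelong-type threshold is what the hypothesis encodes, and I would prove the bound by a Cegrell--Persson style summation over the levels $\{-2^{k+1}<u_j<-2^k\}$, using $a>n-1$ to make the resulting geometric series converge. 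Combining this with the Chern--Levine--Nirenberg estimate for the mass of $T_j$ on the fixed annulus, where $u$ is bounded, the displayed identity gives $\int_{\tilde U}\rho\,(-u_j)^{-a}(dd^cu_j)^n\le\int_{\tilde U}(w_j-u_j)\,d\nu_j$ with positive measures $\nu_j$ satisfying $\sup_j\int_{\tilde U}(-u_j)\,d\nu_j<\infty$.

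It remains to see that $\int_{\tilde U}(w_j-u_j)\,d\nu_j\to0$. Splitting $\tilde U$ into $\{-u_j>R\}$ and $\{-u_j\le R\}$: on the first set $w_j-u_j\le -u_j$ and, by the uniform bound on $\int(-u_j)\,d\nu_j$ together with an equi-integrability refinement of the same energy estimate, the contribution is uniformly small as $R\to\infty$; on the second set $w_j-u_j\to0$ while $\nu_j$ has bounded mass. Letting $j\to\infty$ and then $R\to\infty$ gives the claim, and a fortiori (\ref{app2.eq}). The main obstacle is exactly this uniform weighted-energy bound together with the rigour of the weighted integration by parts: the potentials are only continuous, $(-u_j)^{-a}$ is not plurisubharmonic, and the weight is singular along $\{u=-\infty\}$, so the manipulations must be justified by truncating at level $-k$, estimating the bounded functions $\max(u_j,-k)$, and letting $k\to\infty$, with $a>n-1$ guaranteeing that the truncated energies increase to a finite limit. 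The second delicate point is that $w_j-u_j\to0$ only pointwise while the currents $T_j$ do not converge; reconciling these requires the equi-integrability above, or running the comparison on a slightly larger domain so that the annulus lies where $u$ is continuous and $w_j-u_j\to0$ uniformly there.
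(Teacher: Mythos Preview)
Your route is quite different from the paper's and has a genuine gap at the end.

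The paper never introduces the envelope $w_j$. Instead it doubles the variables: with the explicit smooth maximal function $v(w)=|w_1|^2+\dots+|w_{n-1}|^2+x_n+y_n-M$ on $\Omega$, Theorem~\ref{app1} gives $\Phi_\alpha(u(z)+v(w))\in MPSH(\Omega\times\Omega)$, and $\Phi_\alpha(u+v)\in D(\Omega\times\Omega)$ for $\alpha<1/(2n)$ by \cite{Bed93}, \cite{Blo09}. Hence $\int_{U\times U}(dd^c\Phi_\alpha(u_j(z)+v(w)))^{2n}\to0$; expanding and integrating out the $w$ variable (where $v$ is smooth and bounded away from~$0$) yields $\int_U|u_j|^{-a}(dd^cu_j)^n\to0$ for every $a\in(2n,2n+1)$. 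This is then combined, via H\"older's inequality, with the uniform bound $\int_U|u_j|^{-a}(dd^cu_j)^n\le C_a$ for $a\in(n-1,n)$, which follows from $\Phi_\beta(u)\in D(\Omega)$ for $\beta<1/n$. The interpolation covers all $a>n-1$.

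The step in your argument that does not go through is $\int_{\{|u_j|\le R\}}(w_j-u_j)\,d\nu_j\to0$. You know $w_j-u_j\to0$ only pointwise, while the positive measures $\nu_j$ vary with $j$; bounded total mass is not sufficient, since $\nu_j$ may concentrate precisely on the sets where $w_j-u_j$ remains of order one. Neither of your proposed fixes helps here: the equi-integrability you invoke controls only the tail $\{|u_j|>R\}$, and there is no reason for $u$ to be continuous on any annulus --- a negative maximal plurisubharmonic function can have a polar set that is dense in $\Omega$, so one cannot expect $w_j-u_j\to0$ uniformly on any open subset. Separately, your ``Cegrell--Persson style summation'' does not establish the uniform weighted energy bound; that bound is indeed true, but it is exactly the content of $\Phi_\beta(u)\in D(\Omega)$ for $\beta<1/n$ (the same ingredient the paper uses), and a dyadic-shell argument would require an a~priori estimate on each shell that already encodes the desired conclusion.
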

  
  \begin{ackn}
  	I am thankful to Nguyen Quang Dieu and Nguyen Xuan Hong for useful discussion and comments.
  \end{ackn}
   
   \section{A class of maximal plurisubharmonic functions}
    We say that a function $u\in PSH^{-}(\Omega)$ has $M_1$ property iff for 
    every open set $U\Subset\Omega$, there are $u_j\in PSH^{-}(U)\cap C(U)$ such that $u_j$ is decreasing to $u$ in $U$ and
    \begin{equation}\label{M1prop.eq}
    \lim\limits_{j\to\infty}\left(\int\limits_{U\cap\{u_j>-t\}}(dd^c u_j)^n
    +\int\limits_{U\cap\{u_j>-t\}}du_j\wedge d^cu_j\wedge(dd^cu_j)^{n-1}\right)=0,
    \end{equation}
    for any $t>0$. We denote by $M_1PSH(\Omega)$ the set of negative plurisubharmonic functions in $\Omega$ satisfying $M_1$ property.\\
    If  $\chi: \R\rightarrow\R$ is a convex non-decreasing function, we denote $MPSH_{\chi}(\Omega)$ the set of 
    negative plurisubharmonic functions in $\Omega$ such that $\chi(u)\in MPSH(\Omega)$.\\
    
    \begin{The}\label{M1.the}
    	Let $\Omega$ be a bounded domain in $\C^n$ and $u\in PSH^{-}(\Omega)$. Then the following conditions are equivalent
    	\begin{itemize}
    		\item [(i)] $u\in M_1PSH(\Omega)$.
    		\item [(ii)] $u\in MPSH_{\chi}(\Omega)$ for any  convex non-decreasing function $\chi: \R\rightarrow\R$.
    		\item [(iii)] For any open sets $U,\tilde{U}$ such that $U\Subset\tilde{U}\Subset\Omega $, for any 
    		$u_j\in PSH^{-}(\tilde{U})\cap C(\tilde{U})$ such that $u_j$ is decreasing to $u$ in $\tilde{U}$, we have
    		\begin{center}
    			$\lim\limits_{j\to\infty}\left(\int\limits_U|u_j|^{-a}(dd^cu_j)^n
    			+\int\limits_U|u_j|^{-a-1}du_j\wedge d^cu_j\wedge (dd^cu_j)^{n-1}\right)=0,$
    		\end{center}
    		for all $a>n-1$.
    	\end{itemize}
    	In particular, $M_1$ property is a local notion and $M_1PSH(\Omega)\subset MPSH(\Omega)$.
    \end{The}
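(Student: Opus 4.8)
The engine is the Monge--Amp\`ere identity for a smooth psh $v$ and a convex non-decreasing $\chi\in C^2$,
\[
(dd^c\chi(v))^n=(\chi'(v))^n(dd^cv)^n+n(\chi'(v))^{n-1}\chi''(v)\,dv\wedge d^cv\wedge(dd^cv)^{n-1},
\]
coming from $dd^c\chi(v)=\chi'(v)\,dd^cv+\chi''(v)\,dv\wedge d^cv$ together with $(dv\wedge d^cv)^2=0$; both terms on the right are nonnegative currents. I would first record that this identity survives for the continuous psh $u_j$ after regularisation, since the integrands in (i) and (iii) are exactly these two currents. The clean way to see how (iii) encodes maximality is to test the identity on the family $\chi_a(s)=-(-s)^{1-a/n}$: for $n-1<a<n$ this is convex and non-decreasing, and matching the exponents gives
\[
(dd^c\chi_a(u_j))^n=c_1(n,a)\,|u_j|^{-a}(dd^cu_j)^n+c_2(n,a)\,|u_j|^{-a-1}\,du_j\wedge d^cu_j\wedge(dd^cu_j)^{n-1}
\]
with $c_1,c_2>0$, i.e.\ $\int_U(dd^c\chi_a(u_j))^n$ is, up to constants, precisely the sum in (iii).

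I would then close the loop as $(\mathrm{iii})\Rightarrow(\mathrm{i})\Rightarrow(\mathrm{ii})\Rightarrow(\mathrm{iii})$. The implication $(\mathrm{iii})\Rightarrow(\mathrm{i})$ is immediate: on $\{u_j>-t\}$ one has $1\le t^{a}|u_j|^{-a}$ and $1\le t^{a+1}|u_j|^{-a-1}$, so both truncated integrals are dominated by the weighted ones and tend to $0$. For $(\mathrm{i})\Rightarrow(\mathrm{ii})$ I would reduce a general convex non-decreasing $\chi$ to a bounded one: replacing $\chi$ by $\max(\chi,-k)$ gives bounded convex non-decreasing functions decreasing to $\chi$, and a decreasing limit of maximal functions is maximal, so it suffices to treat bounded $\chi$. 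For such $\chi$ the function $w=\chi(u)$ is bounded, the approximants $w_j=\chi(u_j)$ decrease to $w$ with $(dd^cw_j)^n\to(dd^cw)^n$ weakly, and $(dd^cw)^n$ charges no pluripolar set by Bedford--Taylor, in particular not $\{u=-\infty\}$. On each sublevel set the identity bounds $\int_{\{u_j>-t\}}(dd^cw_j)^n$ by $\chi'(0)^n$ and $\sup_{[-t,0]}\chi''$ times the two truncated integrals of (i), which tend to $0$; a convergence argument then forces $(dd^cw)^n$ to vanish on each $\{u>-t\}$, hence on $\{u>-\infty\}$, so $(dd^cw)^n=0$ and $w$ is maximal.

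Finally $(\mathrm{ii})\Rightarrow(\mathrm{iii})$ runs through the family $\chi_a$: maximality of $\chi_a(u)$ should force the interior masses $\int_U(dd^c\chi_a(u_j))^n$ to tend to $0$ along any continuous decreasing approximation, which by the displayed identity is (iii) for $n-1<a<n$; the range is then extended to all $a>n-1$ by splitting at $\{|u_j|=1\}$, where on $\{u_j\le-1\}$ a larger exponent only decreases the integrand and on $\{u_j>-1\}$ one reuses the near-zero vanishing obtained from the bounded functions $\max(\chi_a,-k)$ via (ii). The two closing assertions are then immediate: condition (iii) is phrased with arbitrary $U\Subset\tilde U\Subset\Omega$ and arbitrary approximations, so it may be verified on small balls and $M_1$ is local; and taking $\chi=\mathrm{id}$ in (ii) yields $u\in MPSH(\Omega)$.

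The main obstacle I expect is analytic rather than algebraic: turning the qualitative statements ``maximal'' or ``truncated masses vanish'' into the quantitative weighted convergence of (iii) requires a uniform, $j$-independent weighted Chern--Levine--Nirenberg bound controlling $\int_U|u_j|^{-a}(dd^cu_j)^n$ and the gradient integral near the zero set $\{u_j=0\}$, where the weight $|u_j|^{-a}$ blows up; the threshold $a>n-1$ is exactly where such an estimate, produced by integrating by parts to trade the two integrands against each other, can be made to close. The second delicate point is the localisation used above, namely transferring the interior control of the approximants onto the limit measure $(dd^c\chi(u))^n$ on the sublevel sets $\{u>-t\}$, which needs convergence in capacity rather than mere weak convergence of the Monge--Amp\`ere measures.
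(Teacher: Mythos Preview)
Your skeleton $(\mathrm{iii})\Rightarrow(\mathrm i)\Rightarrow(\mathrm{ii})\Rightarrow(\mathrm{iii})$ and your choice of the test family $\chi_a(s)=-(-s)^{1-a/n}$ (the paper's $\Phi_\alpha$ with $\alpha=1-a/n$) match the paper exactly, and your $(\mathrm{iii})\Rightarrow(\mathrm i)$ is fine. The substantive divergence is in the other two implications.

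For $(\mathrm{ii})\Rightarrow(\mathrm{iii})$ you have identified the right obstacle but not the right resolution. The question is why maximality of $w=\chi_a(u)$ forces $\int_U(dd^c\chi_a(u_j))^n\to 0$ for \emph{every} continuous decreasing approximation $u_j\searrow u$. In general this implication fails for maximal $w$: if it held, the local--global question for maximality would be settled. The paper's point is that for $0<\alpha<1/n$ the function $\Phi_\alpha(u)$ lies in $D(\Omega)$, the domain of definition of the Monge--Amp\`ere operator, by results of Bedford and B\l ocki; once $\Phi_\alpha(u)\in D(\Omega)\cap MPSH(\Omega)$ one has $(dd^c\Phi_\alpha(u))^n=0$ as a well-defined measure, and then $(dd^c\Phi_\alpha(u_j))^n\to 0$ automatically. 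The threshold $a>n-1$ (i.e.\ $\alpha<1/n$) is precisely the range in which $\Phi_\alpha(u)\in D(\Omega)$ is known. So rather than trying to manufacture a weighted Chern--Levine--Nirenberg estimate from scratch, you should invoke this membership in $D(\Omega)$; that is the missing key lemma.

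For $(\mathrm i)\Rightarrow(\mathrm{ii})$ your route through $(dd^c\chi(u))^n=0$ via Bedford--Taylor convergence and the no-pluripolar-charge argument is workable but heavier than needed, and the ``localisation on $\{u>-t\}$'' you flag as delicate is indeed awkward (note $\{u>-t\}$ need not be open). The paper instead approximates $\chi$ from above by smooth convex $\chi_l$ that are \emph{constant} on some $(-\infty,-m)$; then $\chi_l'$, $\chi_l''$ are supported in $[-m,0]$, the identity bounds $\int_U(dd^c\chi_l(u_j))^n$ directly by the truncated integrals of~(i), and Sadullaev's criterion (a decreasing sequence with Monge--Amp\`ere mass tending to zero has maximal limit) gives $\chi_l(u)\in MPSH(\Omega)$ without ever touching $(dd^c\chi(u))^n$. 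Passing to the decreasing limit $\chi_l\searrow\chi$ finishes.
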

    \begin{proof}
    	$(iii\Rightarrow i)$: Obvious.\\
    	$(i\Rightarrow ii)$:\\
    	Assume that $U\Subset\tilde{U}\Subset\Omega$. Let
    	$u_j\in PSH^{-}(U)\cap C(U)$ such that $u_j$ is decreasing to $u$ in $U$ and the condition \eqref{M1prop.eq} is satisfied.
    	
    	If $\chi$ is smooth and $\chi$ is constant in some interval $(-\infty, -m)$ then
    	\begin{flushleft}
    		$\begin{array}{ll}
    		(dd^c\chi(u_j))^n &=(\chi '(u_j))^n(dd^cu_j)^n+n\chi ''(u_j)(\chi '(u_j))^{n-1}du_j\wedge d^cu_j\wedge (dd^cu_j)^{n-1}\\[12pt]
    		&\leq C \boldsymbol{1}_{\{u_j>-t\}}(dd^cu_j)^n+C\boldsymbol{1}_{\{u_j>-t\}}du_j\wedge d^cu_j\wedge (dd^cu_j)^{n-1},
    		\end{array}$
    	\end{flushleft}
    	where $C,t>0$ depend only on $\chi$. Hence
    	\begin{center}
    		$\int\limits_U	(dd^c\chi(u_j))^n\stackrel{j\to\infty}{\longrightarrow}0.$
    	\end{center}
    	Then, by \cite{Sad81} (see also \cite{Ceg09}), $\chi (u)$ is maximal on $U$ for any open set $U\Subset\Omega$. Thus
    	$\chi (u)\in MPSH(\Omega)$.\\
    	In the general case, for any convex non-decreasing function $\chi$, we can find $\chi_l\searrow\chi$ such that $\chi_l$
    	is smooth, convex and $\chi|_{(-\infty, -m)}=const$ for some $m$.
    	 By above argument, $\chi_l\in MPSH(\Omega)$ for any $l\in\N$.
    	Hence $\chi (u)\in MPSH(\Omega).$\\
    	$(ii\Rightarrow iii)$:\\
    	For any $0<\alpha<\frac{1}{n}$, the function
    	\begin{center}
    		$\Phi_{\alpha}(t)=-(-t)^{\alpha}$
    	\end{center}
    	is convex and non-decreasing in $\R^-$. Assume that $u$ satisfies (ii), we have $\Phi_{\alpha}\in MPSH(\Omega)$.\\
    	
    	By \cite{Bed93} (see also \cite{Blo09}), for any $0<\alpha<\frac{1}{n}$, we have $\Phi_{\alpha}(u)\in D(\Omega)$. Then,
    	for any  $u_j\in PSH^{-}(\tilde{U})\cap C(\tilde{U})$ such that $u_j$ is decreasing to $u$ in $\tilde{U}$, we have
    	\begin{center}
    		$\int\limits_U(dd^c\Phi_{\alpha}(u_j))^n\stackrel{j\to\infty}{\longrightarrow}0, \forall 0<\alpha<\frac{1}{n},$
    	\end{center}
    	and it implies (iii).
    	
    	Finally, by using $(i\Leftrightarrow iii)$, we conclude that $M_1$ property is a local notion.
    \end{proof}
    The following proposition is an immediately corollary of Theorem \ref{M1.the}
    \begin{Prop}
    	Let $\Omega$ be a bounded domain in $\C^n$.
    	\begin{itemize}
    		\item [(i)] If $u\in M_1PSH(\Omega)$ then $\chi (u)\in M_1PSH(\Omega)$ 
    		for any  convex non-decreasing function $\chi: \R^-\rightarrow\R^-$.
    		\item [(ii)] If $u_j\in M_1PSH(\Omega)$ and $u_j$ is decreasing to $u$ then $u\in M_1PSH(\Omega)$.
    		\item[(iii)] Let $u\in PSH^-(\Omega)\cap C^2(\Omega\setminus F)$, where $F=\{z: u(z)=-\infty\}$ is closed. If
    		\begin{center}
    			$(dd^c u)^n=du\wedge d^cu\wedge (dd^c u)^{n-1}=0$
    		\end{center}
    		in $\Omega\setminus F$ then $u\in M_1PSH(\Omega)$.
    	\end{itemize}
    \end{Prop}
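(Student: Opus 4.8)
The plan is to derive all three parts from Theorem \ref{M1.the}, using its three equivalent descriptions together with two elementary facts about maximal functions. The only part that requires real work is (iii); (i) and (ii) are short manipulations of the characterization via composition and via limits.

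For (i), I would use the equivalence $(i)\Leftrightarrow(ii)$ of Theorem \ref{M1.the}. Since $u\in M_1PSH(\Omega)$, condition (ii) gives $\phi(u)\in MPSH(\Omega)$ for every convex non-decreasing $\phi:\R\to\R$. To conclude $\chi(u)\in M_1PSH(\Omega)$ it suffices, again by (ii), to check $\psi(\chi(u))\in MPSH(\Omega)$ for every convex non-decreasing $\psi:\R\to\R$. First extend $\chi$ to a convex non-decreasing $\tilde\chi:\R\to\R$ with $\tilde\chi=\chi$ on $(-\infty,0]$ (prolong past $0$ by the affine function of slope $\chi'_-(0)\ge 0$). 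Then $\phi:=\psi\circ\tilde\chi$ is convex and non-decreasing on $\R$, so $\phi(u)\in MPSH(\Omega)$; and since $u\le 0$ we have $\phi(u)=\psi(\chi(u))$. As $\chi(u)\in PSH^-(\Omega)$, this yields $\chi(u)\in M_1PSH(\Omega)$. I expect no obstacle here.

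For (ii), I would again pass through condition (ii), the extra ingredient being the standard fact that a decreasing limit of maximal functions is maximal: if $v_k\in MPSH(\Omega)$ and $v_k\searrow v\in PSH(\Omega)$, then for any $G\Subset\Omega$ and any $w\in PSH(G)\cap USC(\bar G)$ with $w\le v$ on $\partial G$ we have $w\le v\le v_k$ on $\partial G$, hence $w\le v_k$ on $G$ by maximality of $v_k$, and letting $k\to\infty$ gives $w\le v$. Now fix a convex non-decreasing $\chi:\R\to\R$. Since the $u_j$ are negative and decrease to $u$, we get $u\in PSH^-(\Omega)$ and $\chi(u_j)\searrow\chi(u)$ by monotonicity and continuity of $\chi$; each $\chi(u_j)\in MPSH(\Omega)$ by (ii), so the fact above gives $\chi(u)\in MPSH(\Omega)$. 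As $\chi$ is arbitrary, condition (ii) yields $u\in M_1PSH(\Omega)$.

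For (iii) I would verify the defining $M_1$ property (condition (i)) directly with a concrete sequence. Fix $U\Subset\Omega$ and set $u_j:=u*\rho_{\varepsilon_j}$ with $\varepsilon_j\searrow 0$; these are smooth, negative, plurisubharmonic on $U$ for large $j$ and decrease to $u$. Fix $t>0$, choose $t'>t$, and put $A:=\{z\in\bar U:u(z)\le -t'\}$. The crucial point is that the level region $\{u_j\ge -t\}\cap\bar U$ is eventually trapped in a fixed compact subset of $\Omega\setminus F$: a compactness argument shows $\limsup_j\big(\sup_A u_j\big)\le\sup_A u\le -t'<-t$ (take maximizers $z_j\in A$, pass to $z_j\to z_*$, and use $u_j(z_j)\le u_k(z_j)$ for $j\ge k$ with continuity of $u_k$ and $u_k(z_*)\to u(z_*)$), so $u_j<-t$ on $A$ for large $j$, whence $\{u_j\ge -t\}\cap\bar U\subset K:=\{z\in\bar U:u(z)\ge -t'\}$, a compact subset of $\Omega\setminus F$ on which $u$ is $C^2$. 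On a fixed neighborhood $V$ of $K$ with $\bar V\Subset\Omega\setminus F$ the convolutions converge to $u$ in $C^2(\bar V)$, so the smooth densities of $(dd^cu_j)^n$ and of $du_j\wedge d^cu_j\wedge(dd^cu_j)^{n-1}$ converge uniformly on $V$ to the densities of $(dd^cu)^n=0$ and $du\wedge d^cu\wedge(dd^cu)^{n-1}=0$. Since for large $j$ the region $U\cap\{u_j>-t\}$ lies in $V$, both integrals in \eqref{M1prop.eq} are bounded by the supremum over $V$ of the respective density times $\mathrm{Vol}(V)$, which tends to $0$; this verifies the $M_1$ property and gives $u\in M_1PSH(\Omega)$.

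The main obstacle is precisely the support-control step in (iii): one must prevent $\{u_j>-t\}$ from leaking toward $F$, where neither smoothness nor vanishing of the Monge-Amp\`ere integrands is available. The $\limsup$/compactness estimate is what makes this rigorous, after which the $C^2$-convergence of the convolutions handles everything else. Parts (i) and (ii) carry no comparable difficulty.
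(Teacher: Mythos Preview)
Your proposal is correct and is precisely in the spirit of what the paper intends: the paper gives no proof at all, simply declaring the proposition ``an immediate corollary of Theorem \ref{M1.the}''. Your arguments for (i) and (ii) via characterization (ii) of that theorem (closure of convex non-decreasing functions under composition, and stability of maximality under decreasing limits) are exactly the intended one-line derivations.

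For (iii) you go beyond the paper by supplying an explicit verification of the $M_1$ property with mollifiers. The argument is sound; one point you use implicitly is that under the hypotheses of (iii) the function $u$ is actually continuous as a map $\Omega\to[-\infty,0]$ (it is $C^2$ off $F$ and, by upper semicontinuity, $u(z)\to-\infty$ as $z\to F$). This is what makes your set $A=\{z\in\bar U:u(z)\le -t'\}$ closed, so that the subsequential limit $z_*$ of your maximizers stays in $A$ and the bound $u(z_*)\le -t'$ holds. With that observation explicit, the compactness step is airtight. An alternative, slightly shorter route is to take $u_j=\max\{u,-j\}$ directly: these are continuous (by the continuity of $u$ just noted), decrease to $u$, and for $j>t$ coincide with the $C^2$ function $u$ on $\{u_j>-t\}$, so both integrands in \eqref{M1prop.eq} vanish identically there. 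Either choice works and yields the same conclusion.
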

    In some special cases, we can easily check $M_1$ property by the following criteria
    \begin{Prop}\label{M1crit.prop}
    	Let $\Omega$ be a bounded domain in $\C^n$. Let $\chi:\R\rightarrow\R$ be a smooth convex increasing function such that
    	$\chi'' (t)>0$ for any $t\in\R$. Assume also that $\chi$ is lower bounded.
    	 If $u\in PSH^-(\Omega)$ and $\chi (u)\in MPSH(\Omega)$ then $u\in M_1PSH(\Omega)$.
    \end{Prop}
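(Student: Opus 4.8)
The plan is to verify the defining condition \eqref{M1prop.eq} of $M_1PSH(\Omega)$ directly. Fix an open set $U\Subset\Omega$ and choose $\tilde U$ with $U\Subset\tilde U\Subset\Omega$. Regularizing by convolution, I would take $u_j=u*\rho_{\varepsilon_j}$ for a suitable sequence $\varepsilon_j\searrow 0$; these are smooth, negative and plurisubharmonic on $\tilde U$ for $j$ large, and decrease to $u$ there. The first observation is that $\chi(u)$ is in fact a \emph{bounded} plurisubharmonic function: since $\chi$ is non-decreasing and $u\le 0$ we have $\chi(u)\le\chi(0)$, while the assumed lower bound gives $\chi(u)\ge\inf\chi>-\infty$. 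As $\chi(u)\in MPSH(\Omega)$ is maximal and bounded, the Bedford--Taylor characterization of maximality yields $(dd^c\chi(u))^n=0$ on $\tilde U$.

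Next I would prove that $\int_U(dd^c\chi(u_j))^n\to 0$. Since $\chi$ is continuous and non-decreasing, $\chi(u_j)$ is a decreasing sequence of smooth (hence locally bounded) plurisubharmonic functions converging to the bounded function $\chi(u)$, so by the Bedford--Taylor convergence theorem $(dd^c\chi(u_j))^n\to(dd^c\chi(u))^n$ weakly. Choosing a cut-off $\psi\in C_0^\infty(\tilde U)$ with $0\le\psi\le 1$ and $\psi\equiv 1$ on $U$, I obtain
\[
\int_U(dd^c\chi(u_j))^n\le\int\psi\,(dd^c\chi(u_j))^n\longrightarrow\int\psi\,(dd^c\chi(u))^n=0,
\]
the last equality because $(dd^c\chi(u))^n=0$ on $\tilde U$. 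Since all these integrals are non-negative, $\int_U(dd^c\chi(u_j))^n\to 0$.

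The final step exploits the smoothness and strict convexity of $\chi$. Because $\chi(u_j)$ is smooth, the Monge--Amp\`ere measure expands as
\[
(dd^c\chi(u_j))^n=(\chi'(u_j))^n(dd^cu_j)^n+n\chi''(u_j)(\chi'(u_j))^{n-1}du_j\wedge d^cu_j\wedge(dd^cu_j)^{n-1},
\]
a sum of two non-negative currents. Fix $t>0$. On the set $\{u_j>-t\}$ we have $u_j\in(-t,0)$, hence $\chi'(u_j)\ge\chi'(-t)>0$ (here $\chi'$ is increasing and, by monotonicity of $\chi$ together with $\chi''>0$, strictly positive), and, using $\chi''>0$ and the compactness of $[-t,0]$, also $\chi''(u_j)\ge\min_{[-t,0]}\chi''=:c_2>0$. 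Thus on $U\cap\{u_j>-t\}$ each summand of the $M_1$ integrand is dominated:
\[
(dd^cu_j)^n+du_j\wedge d^cu_j\wedge(dd^cu_j)^{n-1}\le C\,(dd^c\chi(u_j))^n,
\]
with $C$ depending only on $\chi'(-t)$ and $c_2$. Integrating over $U\cap\{u_j>-t\}$ and invoking the previous step gives
\[
\int_{U\cap\{u_j>-t\}}(dd^cu_j)^n+\int_{U\cap\{u_j>-t\}}du_j\wedge d^cu_j\wedge(dd^cu_j)^{n-1}\le C\int_U(dd^c\chi(u_j))^n\longrightarrow 0,
\]
which is exactly \eqref{M1prop.eq}. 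As $U$ was arbitrary, $u\in M_1PSH(\Omega)$.

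I expect the essential point to be this last step rather than the convergence $\int_U(dd^c\chi(u_j))^n\to 0$, which is soft. The hypotheses enter precisely here: lower boundedness makes $\chi(u)$ bounded, so that maximality forces the Monge--Amp\`ere mass to vanish, while strict convexity $\chi''>0$ is exactly what allows one to dominate the gradient term $du_j\wedge d^cu_j\wedge(dd^cu_j)^{n-1}$. Without $\chi''>0$ one could only recover control of $(dd^cu_j)^n$ and not of the full $M_1$ integrand, so the strictness of the convexity is indispensable.
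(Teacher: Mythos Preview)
Your proof is correct and follows essentially the same route as the paper: expand $(dd^c\chi(u_j))^n$, use strict convexity and positivity of $\chi'$ on $[-t,0]$ to bound the $M_1$ integrand from above by $C(dd^c\chi(u_j))^n$, and use that $\chi(u)$ is bounded and maximal (hence $(dd^c\chi(u))^n=0$) to force $\int_U(dd^c\chi(u_j))^n\to 0$. You are in fact slightly more explicit than the paper in spelling out why $\chi(u)\in D(\Omega)$, why $\chi'>0$ everywhere, and in choosing the smooth regularizations $u_j=u*\rho_{\varepsilon_j}$, which makes the pointwise expansion of $(dd^c\chi(u_j))^n$ unambiguous.
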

    \begin{proof}
    	Let $U\Subset\tilde{U}\Subset\Omega$ and $u_j\in PSH(\tilde{U})\cap C(\tilde{U})$ such that $u_j$ is decreasing to $u$. Then
    	\begin{center}
    		$dd^c(\chi(u_j))=\chi'(u_j)dd^cu_j+\chi''(u_j)du_j\wedge d^cu_j$
    	\end{center}
    	and
    	\begin{center}
    		$	(dd^c\chi(u_j))^n=(\chi '(u_j))^n(dd^cu_j)^n+n\chi''(u_j)(\chi'(u_j))^{n-1} du_j\wedge d^cu_j\wedge (dd^c u_j)^{n-1}.$
    	\end{center}
    For any $t>0$, there exists $C>0$ depending only on $t$ and $\chi$ such that
    \begin{equation}\label{M1crit.eq1}
    	(dd^c\chi(u_j))^n
    	\geq C \boldsymbol{1}_{\{u_j>-t\}}(dd^cu_j)^n+C \boldsymbol{1}_{\{u_j>-t\}} du_j\wedge d^cu_j\wedge (dd^c u_j)^{n-1}.
    \end{equation}
    Note that $\chi (u)\in D(\Omega)\cap MPSH(\Omega)$. Hence
    \begin{equation}\label{M1crit.eq2}
    \lim\limits_{j\to\infty}\int\limits_U(dd^c\chi(u_j))^n=0.
    \end{equation}	
    Combining \eqref{M1crit.eq1} and \eqref{M1crit.eq2}, we have
    \begin{center}
    	$\lim\limits_{j\to\infty}\left(\int\limits_{U\cap\{u_j>-t\}}(dd^c u_j)^n
    	+\int\limits_{U\cap\{u_j>-t\}}du_j\wedge d^cu_j\wedge(dd^cu_j)^{n-1}\right)=0.$
    \end{center}
    Thus $u\in M_1PSH(\Omega)$.
    \end{proof}
    \begin{Ex}
    	(i) If $u$ is a negative plurisubharmonic function in $\Omega\subset\C^n$ depending only on $n-1$ variables then $u$ has $M_1$
    	property.\\
    	(ii) If $f: \Omega\rightarrow \C^n$ is a holomorphic mapping of rank $<n$ then $(dd^c|f|^2)^n=0$
    	 (see, for example, in \cite{Ras98}). Then, by Proposition \ref{M1crit.prop}, $\log |f|\in M_1PSH(\Omega)$
    	 if it is negative in $\Omega$.
    \end{Ex}
    \begin{Question}
    	Does Proposition \ref{M1crit.prop} still hold if the assumption ``$\chi$ is lower bounded'' is removed from it?
    \end{Question}
   \section{Proof of the main theorems}
   \subsection{Proof of Theorem \ref{app1}}
   	Without loss of generality, we can assume that $u,v\in PSH^-(\Omega)$.
   	
   	If $u,v\in MPSH_{loc}(\Omega)$ then for any $z_0, w_0\in\Omega$, there are hyperconvex domains $U, \tilde{U}, V, \tilde{V}$ such that
   	$z_0\in U\Subset\tilde{U}\Subset\Omega$, $w_0\in V\Subset\tilde{V}\Subset\Omega$, $u\in MPSH(\tilde{U})$
   	and $v\in MPSH(\tilde{V})$. We need to show that $u(z)+v(w)$ have $M_1$ property in $U\times V$.
   	
   	Let $u_j\in PSH^-(\tilde{U})\cap C(\tilde{U})$ and $v_j\in PSH^-(\tilde{V})\cap C(\tilde{V})$ such that
   	$u_j$ is decreasing to $u$ in $\tilde{U}$ and $v_j$ is decreasing to $v$ in $\tilde{V}$. By \cite{Wal68},
   	there are $\tilde{u}_j\in PSH^-(\tilde{U})\cap C(\tilde{U})$, $\tilde{v}_j\in PSH^-(\tilde{V})\cap C(\tilde{V})$
   	such that
   	\begin{center}
   		$\begin{cases}
   		\tilde{u_j}=u_j\quad\mbox{ in }\tilde{U}\setminus U,\\
   		\tilde{v_j}=v_j\quad\mbox{ in }\tilde{V}\setminus V,\\
   		(dd^c\tilde{u}_j)^n=0\quad\mbox{ in }U,\\
   		(dd^c\tilde{v}_j)^n=0\quad\mbox{ in }V.
   		\end{cases}$
   	\end{center}
   	By the maximality of $u$ and $v$, we conclude that $\tilde{u}_j$ is decreasing to $u$ in $\tilde{U}$ and
   	$\tilde{v}_j$ is decreasing to $v$ in $\tilde{V}$. In $U\times V$, we have
   	\begin{flushleft}
   		$(dd^c(\tilde{u}_j(z)+\tilde{v}_j(w)))^{2n}=C^n_{2n}(dd^c\tilde{u}_j)^n_z\wedge (dd^c \tilde{v}_j)^n_w=0$\\[12pt]
   		$d(\tilde{u}_j(z)+\tilde{v}_j(w))\wedge d^c(\tilde{u}_j(z)+\tilde{v}_j(w))\wedge (dd^c(\tilde{u}_j(z)+\tilde{v}_j(w)))^{2n-1}$\\[12pt]
   		$= C^{n-1}_{2n-1}d_z\tilde{u}_j\wedge d^c_z\tilde{u}_j\wedge (dd^c \tilde{u}_j)^{n-1}_z\wedge (dd^c  \tilde{v}_j)^n_w$
   		$+C^{n-1}_{2n-1}d_w \tilde{v}_j\wedge d^c_w \tilde{v}_j\wedge (dd^c  \tilde{v}_j)^{n-1}_w\wedge (dd^c \tilde{u}_j)^n_z$\\[12pt]
   		$=0.$
   	\end{flushleft}
   	
   	Then $u(z)+v(w)$ has $M_1$ property in $U\times V$. By Theorem \ref{M1.the}, $M_1$ property is a local notion. Hence
   	$u(z)+v(w)\in M_1PSH(\Omega\times\Omega)$. And it implies that $u(z)+v(w)\in MPSH_{\chi}(\Omega\times\Omega)$
   	for any convex non-decreasing function $\chi$.
   \subsection{Proof of Theorem \ref{app2}}
   	Let $v=|z_1|^2+...+|z_{n-1}|^2+x_n+y_n-M$, where $M=\sup\limits_{\Omega}(|z|^2+|x_n|+|y_n|)$. Then $v\in MPSH(\Omega)$.
   	By Theorem \ref{app1}, $\chi(u(z)+v(w))\in MPSH(\Omega\times\Omega)$ for any convex non-decreasing function $\chi$.
   	
   	By \cite{Bed93},\cite{Blo09}, for any $0<\alpha<\frac{1}{2n}$, we have $\Phi_{\alpha}(u(z)+v(w))\in D(\Omega\times\Omega)$,
   	where $\Phi_{\alpha}$ is defined as in the proof of Theorem \ref{M1.the}.\\
   	Then
   	\begin{center}
   		$\int\limits_{U\times U}(dd^c\Phi(u_j(z)+v(w)))^{2n}\stackrel{j\to\infty}{\longrightarrow}0,$
   	\end{center}
   	for any $0<\alpha<\frac{1}{2n}$. Hence
   	\begin{equation}\label{1proofapp2.eq}
   	\int\limits_U|u_j|^{-2n-1+2n\alpha}(dd^cu_j)^n\stackrel{j\to\infty}{\longrightarrow}0,\,\forall 0<\alpha<\frac{1}{2n}.
   	\end{equation}
   	Moreover, $\Phi_{\beta}(u)\in D(\Omega)$ for any $0<\beta<\frac{1}{n}$. Then, for any $0<\beta<\frac{1}{n}$,
   	there is $C_{\beta}>0$ such that
   	\begin{center}
   		$\int\limits_U(dd^c\Phi_{\beta}(u_j))^n\leq C_{\beta},\,\forall j>0.$
   	\end{center}
   	Hence
   	\begin{equation}\label{2proofapp2.eq}
   	\int\limits_U|u_j|^{-n+n\beta}(dd^cu_j)^n\leq C_{\beta},\,\forall j>0, \forall 0<\beta<\frac{1}{n}.
   	\end{equation}
   	Combining \eqref{1proofapp2.eq}, \eqref{2proofapp2.eq} and using H\"older inequality, we obtain \eqref{app2.eq}.
   \section{Relation between some class of maximal plurisubharmonic functions}
   Let $\Omega$ be a bounded domain in $\C^n$. Let $u\in PSH(\Omega)$. If there exists a sequence of convex
   non-decreasing functions 
   $\chi^m:\R\rightarrow\R$ such that
   \begin{itemize}
   	\item $\chi^m$ is lower bounded for every $m$,
   	\item $\chi^m$ is decreasing to $Id$ as $m\rightarrow\infty$,
   	\item $(dd^c\chi^m(u))^n\stackrel{m\to\infty}{\longrightarrow}0$ in the weak sense, 
   \end{itemize}
   then, by \cite{Sad81}, $u$ is maximal. We are interested in the following question
   \begin{Question}\label{appro.quest}
   	If $u$ is maximal, does there exist a sequence of convex non-decreasing function $\chi^m$ satisfying above conditions?
   	If it exists, how to find it?
   \end{Question}
   In this section we discuss about relation between some class of maximal plurisubharmonic functions in a bounded domain
   $\Omega$ in $\C^n$. It can be seen as the first step in approaching Question \ref{appro.quest}.
   
    Assume that $\chi: \R\rightarrow\R$ is a smooth convex function such that
   $\chi|_{(-\infty, -2)}=-1$, $\chi|_{(0,\infty)}=Id_{(0,\infty)}$ and $\chi''(-1)>0$. We denote
   \begin{flushleft}
   	$M_2PSH(\Omega)=\{u\in PSH^-(\Omega)| (dd^c\max\{u, -k\})^n\stackrel{weak}{\longrightarrow}0\mbox{ as }k\rightarrow\infty\}$,\\
   	$M_3PSH(\Omega)=\{u\in PSH^-(\Omega)| (dd^c\chi_k(u))^n\stackrel{weak}{\longrightarrow}0\mbox{ as }k\rightarrow\infty\}$,\\
   	$M_4PSH(\Omega)=\{u\in PSH^-(\Omega)| (dd^c\log(e^u+\dfrac{1}{k}))^n 
   	\stackrel{weak}{\longrightarrow}0\mbox{ as }k\rightarrow\infty\},$
   \end{flushleft}
   where $\chi_k(t)=\chi (t+k)-k$. The main result of this section is following
   \begin{The}\label{rel.the}
   	$M_2PSH(\Omega)\subset M_3PSH(\Omega)$ and $M_3PSH(\Omega)=M_4PSH(\Omega).$
   \end{The}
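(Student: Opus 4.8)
The plan is to prove both assertions by comparing the Monge--Amp\`ere masses of the various truncations through the pointwise expansion
\[
(dd^c\phi(u))^n=(\phi'(u))^n(dd^cu)^n+n(\phi'(u))^{n-1}\phi''(u)\,du\wedge d^cu\wedge(dd^cu)^{n-1},
\]
valid for smooth convex increasing $\phi$ and smooth $u$, and extended to $u\in PSH^-\cap L^\infty$ by Bedford--Taylor approximation. Since the measures in question are positive, it suffices to test against a fixed $\varphi\in C_0^\infty(\Omega)$ with $0\le\varphi\le1$ and show that $\int_\Omega\varphi\,(dd^c\,\cdot\,)^n\to0$. The key structural observation is that all three truncations are of the form ``$\rho(u+s)-s$'' for a fixed convex increasing profile $\rho$ and a level $s\to+\infty$: for $M_3PSH$ we have $\chi_k(t)=\chi(t+k)-k$, and for $M_4PSH$ we have $\psi_k(t)=\log(e^t+\tfrac1k)=\sigma(t+\log k)-\log k$ with the softplus profile $\sigma(x)=\log(1+e^x)$, while $\max\{u,-s\}$ is the ``sharp'' profile $\rho(x)=\max\{x,0\}$.

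For $M_3PSH=M_4PSH$ I compare the profiles $\chi$ and $\sigma$. The inclusion $M_4PSH\subset M_3PSH$ is immediate from a pointwise domination of measures: $\chi'$ and $\chi''$ are supported in $[-2,0]$, where $\sigma'$ and $\sigma''$ are bounded below by a positive constant, so matching levels $k\leftrightarrow e^k$ gives $(dd^c\chi_k(u))^n\le C\,(dd^c\psi_{e^k}(u))^n$ with $C$ depending only on the profiles, and the right-hand side tends to $0$. For the reverse inclusion the obstacle is that $\sigma''$ is spread over all of $\R$ while $\chi''$ is compactly supported; I would dominate $\sigma''$ by a geometrically weighted sum $\sum_j w_j\,\chi''(\,\cdot+2j)$ of shifted copies (the shifts $2j$ tile $\R$ and the exponential tails of $\sigma''$ give $w_j\le Ce^{-c|j|}$), so that the gradient part of $(dd^c\psi_k(u))^n$ is controlled by $\sum_j w_j$ times the corresponding part of $(dd^c\chi_{\log k+2j}(u))^n$. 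One then splits the sum: for $j$ in the main range the $\chi$-masses are small by the $M_3$ hypothesis, while the intermediate and negative levels are killed by the exponentially small weights $w_j$ against the polynomial Chern--Levine--Nirenberg bound for the masses; the first term is handled in the same way. This yields $M_3PSH\subset M_4PSH$.

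For $M_2PSH\subset M_3PSH$ I would split $\int\varphi\,(dd^c\chi_k(u))^n$ over $\{u>-k\}$ and $\{u<-k\}$. On $\{u>-k\}$ one has $\chi_k(u)=u=\max\{u,-k\}$, so the integral equals $\int_{\{u>-k\}}\varphi\,(dd^c\max\{u,-k\})^n$, which tends to $0$ by the $M_2$ hypothesis. On the band $\{-k-2<u<-k\}$ the first term $(\chi_k'(u))^n(dd^cu)^n$ is dominated by $\mathbf 1_{\{u>-k-2\}}(dd^cu)^n=(dd^c\max\{u,-k-2\})^n$ there, whose mass again tends to $0$ by $M_2$ at level $k+2$. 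For the remaining gradient term I would use the sandwich $\max\{u,-k-2\}\le\chi_k(u)\le\max\{u,-k\}$, in which all three functions coincide with $u$ on $\{u\ge-k\}$, and apply the comparison principle on $\{u<-k\}$ to bound $\int_{\{u<-k\}}(dd^c\chi_k(u))^n\le\int_{\{u<-k\}}(dd^c\max\{u,-k-2\})^n$, which is $o(1)$ by $M_2$.

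The step I expect to be the main obstacle is the rigorous control of the masses on the shrinking sublevel sets $\{u<-k\}$. These sets decrease to the pluripolar set $\{u=-\infty\}$ but need not be relatively compact in $\Omega$, since $u\in PSH^-(\Omega)$ may tend to $-\infty$ along $\partial\Omega$; consequently the comparison principle invoked for $M_2PSH\subset M_3PSH$ and the integration-by-parts and Chern--Levine--Nirenberg estimates behind $M_3PSH\subset M_4PSH$ cannot be applied verbatim. Making these arguments precise --- localising near $\mathrm{supp}\,\varphi$, choosing auxiliary domains or plurisubharmonic barriers so that the ordered truncations share boundary values, and showing that the gradient term $du\wedge d^cu\wedge(dd^cu)^{n-1}$ does not concentrate on the band --- is where the real work lies.
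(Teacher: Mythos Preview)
Your approach differs substantially from the paper's. The paper first establishes three characterisations (Propositions~3.1--3.3) which translate membership in $M_2PSH$, $M_3PSH$, $M_4PSH$ into explicit conditions on a \emph{continuous decreasing approximation} $u_j\searrow u$: membership in $M_iPSH$ is equivalent to $\int_{U\cap\{u_j>-M\}}(dd^cu_j)^n\to 0$ together with a specific decay condition on $\int_{U\cap\{u_j\text{ in a band}\}}du_j\wedge d^cu_j\wedge(dd^cu_j)^{n-1}$. Working with the bounded continuous $u_j$ rather than with $u$ is exactly how the paper sidesteps the localisation obstacle you flag at the end; the expansion formula then holds honestly at the $u_j$ level. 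For $M_3PSH=M_4PSH$ the paper compares these gradient integrals over unit shells $\{-k-l-1<u_j<-k-l\}$ against the weight $e^{-nl}$, which is your geometric-sum idea in a cleaner guise; this part of your plan is essentially correct and close to the paper's. For $M_2PSH\subset M_3PSH$, however, the paper does \emph{not} use a comparison-principle sandwich; instead it observes that the $M_2$ condition (via Prop.~3.1) gives control on arbitrarily thin bands $\{-k-\tfrac1l<u_j<-k+\tfrac1l\}$ with mass $<\epsilon/l$, and then invokes the Besicovitch covering theorem to cover the unit band $(-k-1,-k)$ by such thin bands with bounded overlap, obtaining the $M_3$ condition of Prop.~3.2.

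Your comparison-principle route for $M_2PSH\subset M_3PSH$ has, beyond the acknowledged non-compactness of $\{u<-k\}$, a further difficulty: the comparison principle bounds the \emph{total} mass over $\{v<w\}$, not the mass against a test function $\varphi$, so it does not localise to $\operatorname{supp}\varphi$ without additional work (e.g.\ a PSH barrier or cutoff argument). Your profile-domination for $M_3PSH\subset M_4PSH$ also needs care: with shifts by $2$ the supports of $\chi''(\,\cdot+2j)$ are merely adjacent, and since $(\chi')^{n-1}$ vanishes at the left endpoint of each, the sum $\sum_jw_j(\chi'(\cdot+2j))^{n-1}\chi''(\cdot+2j)$ can vanish at points where $(\sigma')^{n-1}\sigma''$ does not; overlapping shifts fix this, but it should be said. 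None of this is fatal, but the paper's device of passing to continuous approximants $u_j$ first is what makes all three comparisons routine.
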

   First, we introduce some characteristics of $M_2PSH(\Omega), M_3PSH(\Omega)$ and $M_4PSH(\Omega)$.
   \begin{Prop}\label{M2.prop}
   	Let $u\in PSH^{-}(\Omega)$. Assume that $u_j\in PSH^{-}(\Omega)\cap C(\Omega)$ is decreasing to $u$. Then the following
   	conditions are equivalent
   	\begin{itemize}
   		\item [(i)] $u\in M_2PSH(\Omega)$.
   		\item [(ii)] If $U\Subset\Omega$ then for every $M>0$
   		\begin{equation}\label{M2.eq1}
   		\int\limits_{U\cap\{u_j>-M\}}(dd^c u_j)^n\stackrel{j\to\infty}{\longrightarrow}0,
   		\end{equation}
   		and for any $\epsilon>0$, there is $k_0>0$ such that
   		\begin{equation}\label{M2.eq2}
   		\forall k\geq k_0,\exists l_0>0: \limsup\limits_{j\to\infty}
   		\int\limits_{U\cap\{-k-\frac{1}{l}<u_j<-k+\frac{1}{l}\}}du_j\wedge d^cu_j\wedge (dd^cu_j)^{n-1}
   		<\dfrac{\epsilon}{l}, \forall l>l_0.
   		\end{equation}
   	\end{itemize}
   \end{Prop}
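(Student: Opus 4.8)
The plan is to compare the Monge--Amp\`ere mass of the truncation $\max\{u,-k\}$ with the two quantities in (ii) by inserting a smooth convex approximation of $t\mapsto\max\{t,-k\}$ and exploiting the pointwise formula for the Monge--Amp\`ere measure of a composition. Fix $U\Subset\Omega$ and, for $k,l>0$, choose a smooth convex nondecreasing $\chi_{k,l}:\R\to\R$ with $\chi_{k,l}(t)=-k$ for $t\le-k-\tfrac1l$, $\chi_{k,l}(t)=t$ for $t\ge-k+\tfrac1l$, and $0\le\chi_{k,l}'\le1$; since $\int\chi_{k,l}''=1$ over an interval of length $2/l$, the weight $\chi_{k,l}''$ is of order $l$ there. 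Then $\chi_{k,l}(u_j)\searrow\chi_{k,l}(u)$ as $j\to\infty$ and $\chi_{k,l}(u)\searrow\max\{u,-k\}$ as $l\to\infty$, all locally bounded, so by the Bedford--Taylor convergence theorem the operator $(dd^c\cdot)^n$ is continuous along both limits. The engine of the proof is
\[
(dd^c\chi_{k,l}(u_j))^n=(\chi_{k,l}'(u_j))^n(dd^cu_j)^n+n\chi_{k,l}''(u_j)(\chi_{k,l}'(u_j))^{n-1}\,du_j\wedge d^cu_j\wedge(dd^cu_j)^{n-1},
\]
whose two summands I call $A_{j,k,l}$ and $B_{j,k,l}$ after integrating over $U$. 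The point is that $A$ is supported in $\{u_j>-k-\tfrac1l\}$ and is governed by \eqref{M2.eq1}, while in $B$ the factor $\chi_{k,l}''\sim l$ exactly cancels the $1/l$ on the right-hand side of \eqref{M2.eq2}.

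For (i)$\Rightarrow$(ii) I first get the mass bound \eqref{M2.eq1}: when $k>M$ one has $\max\{u_j,-k\}=u_j$ on the open set $\{u_j>-M\}$, so $\int_{U\cap\{u_j>-M\}}(dd^cu_j)^n\le\int\phi\,(dd^c\max\{u_j,-k\})^n$ for any cutoff $\phi\in C_c(\Omega)$ with $\phi\equiv1$ on $\bar U$; letting $j\to\infty$ by weak continuity and then $k\to\infty$ by hypothesis (i) makes the right-hand side vanish, and the left-hand side is independent of $k$. For \eqref{M2.eq2} I discard $A\ge0$ and bound $B$ from below: choosing $\chi_{k,l}$ so that $\chi_{k,l}'\ge\tfrac12$ and $\chi_{k,l}''\ge cl$ on a subinterval $I_{k,l}$ of width $\sim1/l$ in the transition region gives $\int_U(dd^c\chi_{k,l}(u_j))^n\ge c'l\,T_j(U\cap\{u_j\in I_{k,l}\})$, where $T_j:=du_j\wedge d^cu_j\wedge(dd^cu_j)^{n-1}$. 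Testing against $\phi$, passing $j\to\infty$ and then $l\to\infty$, $k\to\infty$ forces $\limsup_j l\,T_j(U\cap\{u_j\in I_{k,l}\})$ to be small; since a symmetric window $\{-k-\tfrac1l<u_j<-k+\tfrac1l\}$ is a right window of width $2/l$ based at the shifted level $-(k+\tfrac1l)$, reparametrising $k$ and $l$ converts this into \eqref{M2.eq2}.

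For (ii)$\Rightarrow$(i) the same identity is read as an upper bound. Here $A_{j,k,l}\le\int_{U\cap\{u_j>-k-1/l\}}(dd^cu_j)^n\to0$ as $j\to\infty$ by \eqref{M2.eq1}, while $\chi_{k,l}'\le1$ and $\chi_{k,l}''\le Cl$ on the whole transition window give $B_{j,k,l}\le Cnl\,T_j(U\cap\{-k-\tfrac1l<u_j<-k+\tfrac1l\})$, so \eqref{M2.eq2} yields $\limsup_jB_{j,k,l}\le Cn\epsilon$ for $k\ge k_0$, $l>l_0$. Testing against a cutoff supported in $U$ and using weak continuity in $j$ bounds $\int_{U'}(dd^c\chi_{k,l}(u))^n$ by $Cn\epsilon$ uniformly in $l$; letting $l\to\infty$ and using continuity in $l$ gives $\int_{U'}(dd^c\max\{u,-k\})^n\le Cn\epsilon$ for all $k\ge k_0$. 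As $\epsilon$ and $U'\Subset\Omega$ are arbitrary, this is precisely the weak convergence defining $M_2PSH(\Omega)$.

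The main obstacle is the design of the profile $\chi_{k,l}$: the gradient term carries the degenerate weight $(\chi_{k,l}')^{n-1}$, which vanishes at the left end of the transition window, so a lower bound $B\gtrsim l\,T_j$ can only be harvested where $\chi_{k,l}'$ stays bounded away from $0$. Reconciling this one-sided, shifted estimate with the symmetric window of \eqref{M2.eq2}, while keeping $\chi_{k,l}''$ of exact order $l$ so that the powers of $l$ cancel, is the delicate point; the rest is three nested applications of the Bedford--Taylor convergence theorem together with the comparison between $\int_U(\cdot)$ and integration against compactly supported cutoffs.
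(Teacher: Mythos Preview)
Your approach is essentially the same as the paper's. The paper builds the smoothing by rescaling the fixed profile $\chi$ from the section's setup, setting $\chi_{k,l}(t)=\tfrac1l\chi((t+k-\tfrac1l)l)-k+\tfrac1l$, and then writes down the same composition identity and the same two-sided bounds; its lower bound for the gradient term is likewise only on a subwindow $(-k-\delta/l,-k+\delta/l)$ with $\delta$ coming from $\chi''(-1)>0$, so your ``obstacle'' and reparametrisation are exactly what the paper does implicitly when it replaces $l$ by $l/\delta$. Your direct derivation of \eqref{M2.eq1} from the locality of $(dd^c\cdot)^n$ on $\{u_j>-M\}$ is a minor shortcut compared to the paper, which reads it off from the same smoothed estimate.
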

   \begin{proof}
   	For any $k,l>0$, we denote
   	\begin{center}
   		$\chi_{k,l}(t)=\dfrac{1}{l}\chi ((t+k-\dfrac{1}{l})l)-k+\dfrac{1}{l}.$
   	\end{center}
   	Then $\chi_{k,l}(u)$ converges to $\max\{u,-k\}$ as $l\rightarrow\infty$. Moreover
   	\begin{center}
   		$\chi_{k,l}(u)\geq\max\{u,-k\}+O(\dfrac{1}{l}).$
   	\end{center}
   	Hence, by \cite{Ceg09},
   	\begin{equation}\label{M2proof.eq1}
   	(dd^c\chi_{k,l}(u))^n\stackrel{l\to\infty}{\longrightarrow}(dd^c\max\{u,-k\})^n
   	\end{equation}
   	in the weak sense.
   	
   	For any $j>0$, we have
   	\begin{center}
   		$(dd^c\chi_{k,l}(u_j))^n=(\chi'((u_j+k-\dfrac{1}{l})l))^n(dd^cu_j)^n
   		+nl\chi''((u_j+k-\dfrac{1}{l})l)(\chi'((u_j+k-\dfrac{1}{l})l))^{n-1}du_j\wedge d^cu_j\wedge (dd^cu_j)^{n-1}.$
   	\end{center}
   	Then, there are $C_1,C_2,\delta>0$ depending only on $\chi$ such that
   	\begin{center}
   		$(dd^c\chi_{k,l}(u_j))^n\leq
   		C_1\boldsymbol{1}_{\{u_j>-k-\frac{1}{l}\}}(dd^cu_j)^n+
   		 C_1l\boldsymbol{1}_{\{-k-\frac{1}{l}<u_j<-k+\frac{1}{l}\}}du_j\wedge d^cu_j\wedge (dd^cu_j)^{n-1},$
   	\end{center}
   	and
   	\begin{center}
   			$(dd^c\chi_{k,l}(u_j))^n\geq
   			C_2\boldsymbol{1}_{\{u_j>-k+\frac{1}{l}\}}(dd^cu_j)^n+
   			C_2l\boldsymbol{1}_{\{-k-\frac{\delta}{l}<u_j<-k+\frac{\delta}{l}\}}du_j\wedge d^cu_j\wedge (dd^cu_j)^{n-1}.$
   	\end{center}
   	Moreover,
   	\begin{center}
   			$(dd^c\chi_{k,l}(u_j))^n\stackrel{j\to\infty}{\longrightarrow}(dd^c\chi_{k,l}(u))^n$
   	\end{center}
   	in the weak sense.
   	
   	Then $(ii)$ is equivalent to
   	\begin{center}
   		$\limsup\limits_{l\to\infty}\int\limits_U(dd^c\chi_{k,l}(u))^n\stackrel{k\to\infty}{\longrightarrow} 0,$
   	\end{center}
   	for any $U\Subset\Omega$.
   	
   Hence, by \eqref{M2proof.eq1}, we conclude that $(i)$ is equivalent to $(ii)$.
   	\end{proof}
   \begin{Prop}\label{M3.prop}
   	Let $u\in PSH^{-}(\Omega)$. Assume that $u_j\in PSH^{-}(\Omega)\cap C(\Omega)$ is decreasing to $u$. Then the following
   	conditions are equivalent
   	\begin{itemize}
   	\item [(i)] $u\in M_3PSH(\Omega)$.
   	\item [(ii)] If $U\Subset\Omega$ then
   	\begin{equation}\label{M3.eq1}
   	\int\limits_{U\cap\{u_j>-M\}}(dd^c u_j)^n\stackrel{j\to\infty}{\longrightarrow}0,
   	\end{equation}
   	and 
   	\begin{equation}\label{M3.eq2}
   	\limsup\limits_{j\to\infty}\int\limits_{U\cap\{-k-1<u_j<-k\}}du_j\wedge d^cu_j\wedge (dd^cu_j)^{n-1}
   	\stackrel{k\to\infty}{\longrightarrow}0.
   	\end{equation}
   \end{itemize}
   \end{Prop}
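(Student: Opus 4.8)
The plan is to mirror the proof of Proposition \ref{M2.prop}, replacing the two-parameter family $\chi_{k,l}$ by the single family $\chi_k(t)=\chi(t+k)-k$. First I would record, for the continuous approximants $u_j\downarrow u$, the pointwise expansion
\[
(dd^c\chi_k(u_j))^n=(\chi'(u_j+k))^n(dd^cu_j)^n+n\chi''(u_j+k)(\chi'(u_j+k))^{n-1}\,du_j\wedge d^cu_j\wedge(dd^cu_j)^{n-1}.
\]
Since $\chi'=1$ on $(0,\infty)$, $\chi'=0$ on $(-\infty,-2)$, $0\le\chi'\le1$, and $\chi''$ is supported in $[-2,0]$ with $\chi''(-1)>0$, this gives two-sided bounds: the first term equals $(dd^cu_j)^n$ on $\{u_j>-k\}$ and is supported in $\{u_j>-k-2\}$, while the second term is supported in the width-two transition shell $\{-k-2<u_j<-k\}$ and is bounded below by $c\,du_j\wedge d^cu_j\wedge(dd^cu_j)^{n-1}$ on a subshell around $-k-1$ where $\chi''(\cdot+k)\ge c>0$. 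The other ingredient is the Bedford--Taylor convergence $(dd^c\chi_k(u_j))^n\to(dd^c\chi_k(u))^n$ weakly as $j\to\infty$, together with the one-sided mass inequalities for vague limits: $\limsup_j\mu_j(K)\le\mu(K)$ on compacta and $\liminf_j\mu_j(G)\ge\mu(G)$ on open sets.

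For $(ii)\Rightarrow(i)$, I fix $U\Subset V\Subset\Omega$ and a cutoff $\phi\in C_c(V)$ with $\boldsymbol 1_U\le\phi\le1$, so that $\int_U(dd^c\chi_k(u))^n\le\int\phi\,(dd^c\chi_k(u))^n=\lim_j\int\phi\,(dd^c\chi_k(u_j))^n\le\limsup_j\int_V(dd^c\chi_k(u_j))^n$. I then apply the upper bound: the first-term contribution is $\le\int_{V\cap\{u_j>-k-2\}}(dd^cu_j)^n$, which tends to $0$ as $j\to\infty$ by \eqref{M3.eq1} with $M=k+2$; the second-term contribution over $\{-k-2<u_j<-k\}$ splits across the two unit shells $\{-k-1<u_j<-k\}$ and $\{-k-2<u_j<-k-1\}$, which are the shells of \eqref{M3.eq2} for the indices $k$ and $k+1$. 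Thus $\int_U(dd^c\chi_k(u))^n\le C(\limsup_j g_k+\limsup_j g_{k+1})$, where $g_k$ denotes the shell integral in \eqref{M3.eq2}, and letting $k\to\infty$ yields (i).

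For $(i)\Rightarrow(ii)$, fix $U\Subset\Omega$. For \eqref{M3.eq1} I drop the gradient term in the lower bound, so that $\int_{U\cap\{u_j>-M\}}(dd^cu_j)^n\le\int_{\bar U}(dd^c\chi_k(u_j))^n$ for every $k\ge M$; taking $\limsup_j$ and using $\limsup_j\int_{\bar U}(dd^c\chi_k(u_j))^n\le\int_{\bar U}(dd^c\chi_k(u))^n$, then letting $k\to\infty$ so that the right-hand side vanishes (the vague limit is the zero measure, hence its $\limsup$ mass on the compact set $\bar U$ is $0$), forces the left-hand side, which is independent of $k$, to be $0$. For \eqref{M3.eq2} I keep only the gradient part of the lower bound on the subshell around $-k-1$, obtaining $\int_{U\cap\{-k-1-\eta<u_j<-k-1+\eta\}}du_j\wedge d^cu_j\wedge(dd^cu_j)^{n-1}\le C\int_{\bar U}(dd^c\chi_k(u_j))^n$, and conclude by the same passage to the vague limit.

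The hard part is reconciling the unit shell $\{-k-1<u_j<-k\}$ of \eqref{M3.eq2} with the geometry of $\chi$: the gradient term lives on the full width-two transition $\{-k-2<u_j<-k\}$, whereas the only guaranteed lower bound for $\chi''$ is on a short interval around $-1$. In the direction $(ii)\Rightarrow(i)$ this is harmless, since I cover the transition by two consecutive unit shells. In the direction $(i)\Rightarrow(ii)$ it is the delicate point: to recover the whole unit shell I must either work with a $\chi$ satisfying $\chi''>0$ throughout $(-2,0)$, so that the subshells for consecutive indices cover every unit shell (this needs the positivity interval of $\chi''$ to have length at least one), or run a covering argument across several indices. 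I also have to dispose of the level sets $\{u_j=-k\}$, which I handle by noting that only countably many levels carry mass of $du_j\wedge d^cu_j\wedge(dd^cu_j)^{n-1}$ and by passing to half-open shells. Throughout, I must remain scrupulous that weak convergence gives only one-sided control of masses, which is precisely why the compact set $\bar U$ is used in one direction and the cutoff $\phi$ with a slightly larger $V$ in the other.
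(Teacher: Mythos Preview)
Your proposal is correct and is exactly the approach the paper intends: its proof of this proposition consists solely of the remark that the argument is similar to that of Proposition~\ref{M2.prop} with details left to the reader, and you have filled in those details by replacing the two-parameter family $\chi_{k,l}$ with $\chi_k$. The delicate point you flag about the narrow positivity interval of $\chi''$ is real, and the covering resolution you sketch (finitely many real shifts of $k$ so that the short subshells tile the unit shell) is the correct fix.
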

   The proof of Proposition \ref{M3.prop} is similar to the proof of Proposition \ref{M2.prop}. We leave the details to the reader.
   
   In \cite{Sad12}, Sadullaev has proved a characteristic of $M_4PSH(\Omega)$:
   \begin{The}\label{Sad.the}
   	Let $u\in PSH^-(\Omega)$. Denote $v=e^u$. Then $u\in M_4PSH(\Omega)$ iff
   	\begin{center}
   		$v(dd^c v)^n-ndv\wedge d^cv\wedge (dd^cv)^{n-1}=0,$
   	\end{center}
   	and
   	\begin{center}
   		$\lim\limits_{t\to 0}\dfrac{1}{t^n}\int\limits_{U\cap\{v<t\}}(dd^cv)^n=0,$
   	\end{center}
   	for any $U\Subset\Omega$.
   \end{The}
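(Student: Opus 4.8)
The plan is to reduce everything to a single exact identity relating the Monge--Amp\`ere measure $\mu_k:=(dd^c\log(v+\tfrac1k))^n$ to two fixed Bedford--Taylor currents built from $v=e^u$. Since $u<0$, the function $v=e^u$ is bounded plurisubharmonic with $0\le v<1$, and because $f_k(t):=\log(e^t+\tfrac1k)$ is smooth, convex and increasing, $\log(v+\tfrac1k)=f_k(u)$ is again bounded plurisubharmonic; hence all the measures in sight are legitimate Bedford--Taylor measures. Applying the chain rule for the complex Monge--Amp\`ere operator to $\phi(t)=\log(t+\tfrac1k)$ (first for smooth $v_j\downarrow v$ obtained from $v_j=e^{u_j}$ with $u_j\downarrow u$ smooth, then passing to the limit) I would establish
\[
(dd^c\log(v+\tfrac1k))^n=\frac{1}{(v+\tfrac1k)^n}(dd^cv)^n-\frac{n}{(v+\tfrac1k)^{n+1}}\,dv\wedge d^cv\wedge(dd^cv)^{n-1}.
\]
Multiplying by the bounded factor $(v+\tfrac1k)^{n+1}\in[(\tfrac1k)^{n+1},2^{n+1}]$ and setting $T:=v(dd^cv)^n-n\,dv\wedge d^cv\wedge(dd^cv)^{n-1}$, this becomes the key identity
\[
(v+\tfrac1k)^{n+1}\mu_k=T+\tfrac1k(dd^cv)^n.\qquad(\star)
\]

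For the implication $u\in M_4PSH(\Omega)\Rightarrow$ the two conditions, I would argue as follows. Weak convergence $\mu_k\to0$ of positive measures forces $\mu_k(K)\to0$ for every compact $K\Subset\Omega$ (test against $\psi\in C_c$ with $\boldsymbol{1}_K\le\psi\le1$). Reading $(\star)$ as the exact equality $T=(v+\tfrac1k)^{n+1}\mu_k-\tfrac1k(dd^cv)^n$ and using $0\le(v+\tfrac1k)^{n+1}\le2^{n+1}$, I get $|T|(K)\le 2^{n+1}\mu_k(K)+\tfrac1k(dd^cv)^n(K)\to0$; since the left-hand side is independent of $k$, $T\equiv0$. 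Knowing $T=0$, $(\star)$ collapses to $\mu_k=\frac{1}{k(v+\frac1k)^{n+1}}(dd^cv)^n$. On $\{v<\tfrac1k\}$ the density is at least $k^n/2^{n+1}$, so for $U\Subset\Omega$,
\[
\frac{1}{(1/k)^n}\int_{U\cap\{v<1/k\}}(dd^cv)^n\le 2^{n+1}\mu_k(U)\to0 .
\]
This gives the growth condition along $t=\tfrac1k$, and monotonicity of $t\mapsto\int_{U\cap\{v<t\}}(dd^cv)^n$ together with $(\tfrac{k+1}{k})^n\to1$ upgrades it to $\lim_{t\to0}t^{-n}\int_{U\cap\{v<t\}}(dd^cv)^n=0$.

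For the converse, assume $T=0$ and the growth condition, so again $\mu_k=\frac{1}{k(v+\frac1k)^{n+1}}(dd^cv)^n$; I must show $\mu_k(U)\to0$ for every $U\Subset\Omega$. Fix $\delta>0$ and split at the level $v=\delta$. On $\{v\ge\delta\}$ the density is $\le\frac{1}{k\delta^{n+1}}$, so that piece is $\le\frac{1}{k\delta^{n+1}}(dd^cv)^n(U)\to0$. On $\{v<\delta\}$ I would pass to the distribution function $F(t):=\int_{U\cap\{v<t\}}(dd^cv)^n$ (which has no atom at $0$, since $(dd^cv)^n$ charges no pluripolar set), integrate by parts, and use $F(t)\le\varepsilon t^n$ for $t<\delta$; the substitution $t=s/k$ turns the main term into $\varepsilon(n+1)\int_0^{k\delta}\frac{s^n}{(s+1)^{n+2}}\,ds\le\varepsilon(n+1)\int_0^\infty\frac{s^n}{(s+1)^{n+2}}\,ds$, a finite multiple of $\varepsilon$. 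Letting $k\to\infty$ and then $\varepsilon\to0$ yields $\mu_k(U)\to0$, hence $\mu_k\to0$ weakly.

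The routine parts are these two growth-function estimates. The genuine obstacle is the rigorous proof of $(\star)$ for the merely bounded (non-smooth) function $v=e^u$: one must justify, under $v_j\downarrow v$, the simultaneous Bedford--Taylor convergence of $(dd^cv_j)^n$, of the mixed gradient current $dv_j\wedge d^cv_j\wedge(dd^cv_j)^{n-1}$, and of the plurisubharmonic-weighted products $v_j(dd^cv_j)^n$, and check that no mass of $T$ is hidden on the pluripolar set $\{v=0\}=\{u=-\infty\}$ (handled by the fact that Monge--Amp\`ere measures of bounded plurisubharmonic functions do not charge pluripolar sets). Once $(\star)$ is secured, the remainder of the argument is elementary.
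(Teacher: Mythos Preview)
The paper does not prove this theorem at all: it is quoted verbatim from \cite{Sad12} with the attribution ``In \cite{Sad12}, Sadullaev has proved a characteristic of $M_4PSH(\Omega)$,'' and no argument is given. There is therefore nothing in the paper to compare your proof against.

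On its own merits your sketch is sound and follows the natural route. The computation leading to $(\star)$ is correct; once $(\star)$ holds as an identity of measures, both implications go through as you describe (the total-variation bound ``$|T|(K)\le\cdots$'' is slightly informal since $T$ is signed, but testing $(\star)$ against nonnegative $\psi\in C_c$ and letting $k\to\infty$ gives $\langle T,\psi\rangle=0$, which is what you need). The layer-cake/integration-by-parts estimate for the converse is also correct, and your observation that $(dd^cv)^n$ puts no mass on the pluripolar set $\{v=0\}$ handles the boundary term cleanly. You have correctly isolated the only nontrivial step: the rigorous passage to the limit that establishes $(\star)$ for merely bounded $v=e^u$, which requires the Bedford--Taylor continuity of $(dd^cv_j)^n$, of $dv_j\wedge d^cv_j\wedge(dd^cv_j)^{n-1}$, and of products with the decreasing quasi-continuous weights $v_j$ and $(v_j+\tfrac1k)^{\pm(n+1)}$. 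These are standard (see e.g.\ the monotone-convergence theorems for bounded psh functions), so the proposal is complete modulo routine citations.
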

  Using Theorem \ref{Sad.the}, it is easy to show that
   \begin{Prop}\label{M4.prop}
   		Let $u\in PSH^{-}(\Omega)$. Assume that $u_j\in PSH^{-}(\Omega)\cap C(\Omega)$ is decreasing to $u$. Then the following
   		conditions are equivalent
   		\begin{itemize}
   			\item [(i)] $u\in M_4PSH(\Omega)$.
   			\item [(ii)] If $U\Subset\Omega$ then
   			\begin{equation}\label{M4.eq1}
   			\int\limits_{U\cap\{u_j>-M\}}(dd^c u_j)^n\stackrel{j\to\infty}{\longrightarrow}0,
   			\end{equation}
   			and 
   			\begin{equation}\label{M4.eq2}
   			\limsup\limits_{j\to\infty}\int\limits_{U\cap\{u_j<-k\}}e^{n(u_j+k)}
   			du_j\wedge d^cu_j\wedge (dd^cu_j)^{n-1}
   			\stackrel{k\to\infty}{\longrightarrow}0.
   			\end{equation}
   		\end{itemize}
   	\end{Prop}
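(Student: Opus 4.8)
The plan is to substitute $v:=e^{u}$ and to reduce both conditions to Sadullaev's characterisation in Theorem \ref{Sad.the}. Since $u\in PSH^{-}(\Omega)$ we have $0<v\le 1$, so $v$ is a \emph{bounded} plurisubharmonic function, and $v_{j}:=e^{u_{j}}$ is continuous and decreases to $v$; hence $(dd^{c}v_{j})^{n}\to (dd^{c}v)^{n}$ and $dv_{j}\wedge d^{c}v_{j}\wedge (dd^{c}v_{j})^{n-1}\to dv\wedge d^{c}v\wedge (dd^{c}v)^{n-1}$ weakly, by Bedford--Taylor theory. The computation that makes the substitution work is purely algebraic: from $dv=v\,du$, $d^{c}v=v\,d^{c}u$, $dd^{c}v=v(dd^{c}u+du\wedge d^{c}u)$, together with the vanishing $(du\wedge d^{c}u)^{2}=0$, one obtains
\begin{equation*}
(dd^{c}v)^{n}=v^{n}\big[(dd^{c}u)^{n}+n\,du\wedge d^{c}u\wedge (dd^{c}u)^{n-1}\big],
\end{equation*}
\begin{equation*}
dv\wedge d^{c}v\wedge (dd^{c}v)^{n-1}=v^{n+1}\,du\wedge d^{c}u\wedge (dd^{c}u)^{n-1},
\end{equation*}
and the identical formulas for the pair $(u_{j},v_{j})$.

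Substituting the first identity into $t^{-n}\int_{U\cap\{v_{j}<t\}}(dd^{c}v_{j})^{n}$ and noting that $t^{-n}v_{j}^{n}=e^{n(u_{j}+k)}$ on $\{v_{j}<t\}=\{u_{j}<-k\}$ when $t=e^{-k}$, I would obtain the master identity
\begin{multline*}
t^{-n}\int\limits_{U\cap\{v_{j}<t\}}(dd^{c}v_{j})^{n}=\int\limits_{U\cap\{u_{j}<-k\}}e^{n(u_{j}+k)}(dd^{c}u_{j})^{n}\\
+n\int\limits_{U\cap\{u_{j}<-k\}}e^{n(u_{j}+k)}\,du_{j}\wedge d^{c}u_{j}\wedge (dd^{c}u_{j})^{n-1}.
\end{multline*}
The second term on the right is precisely the integral in \eqref{M4.eq2}, and both terms are nonnegative. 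Thus the level--set quantity $t^{-n}\int_{U\cap\{v<t\}}(dd^{c}v)^{n}$ appearing in Theorem \ref{Sad.the} splits, after letting $j\to\infty$, into a Monge--Amp\`ere part and the gradient part occurring in condition (ii).

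With these identities in hand the two halves of (ii) match the two conditions of Theorem \ref{Sad.the}. The common condition \eqref{M4.eq1}, $\int_{U\cap\{u_{j}>-M\}}(dd^{c}u_{j})^{n}\to 0$ for every $M$, corresponds --- via Bedford--Taylor convergence on the open sets $\{v>e^{-M}\}$, where $u=\log v$ is bounded --- to the vanishing of $(dd^{c}u)^{n}$ on $\{u>-\infty\}$, which by the first displayed identity is exactly Sadullaev's equation $v(dd^{c}v)^{n}=n\,dv\wedge d^{c}v\wedge (dd^{c}v)^{n-1}$. Granting this, on $\{v>0\}$ the measure $(dd^{c}v)^{n}$ equals its gradient part, so the Monge--Amp\`ere part of $(dd^{c}v)^{n}$ is supported on $\{v=0\}$; combined with the master identity this shows that Sadullaev's second condition $t^{-n}\int_{U\cap\{v<t\}}(dd^{c}v)^{n}\to 0$ holds if and only if the gradient integral \eqref{M4.eq2} tends to $0$. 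Together with Theorem \ref{Sad.the} this yields (i)$\Leftrightarrow$(ii). In the direction (i)$\Rightarrow$(ii) the gradient part is automatically dominated by the (vanishing) full level--set integral, so this implication is the easy one.

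The main obstacle is analytic rather than formal, and lies in the converse (ii)$\Rightarrow$(i). One must justify interchanging the limits $j\to\infty$ and $t\to0$ (i.e.\ $k\to\infty$) and compare, for almost every $t$, the finite--$j$ masses $\int_{\{v_{j}<t\}}(dd^{c}v_{j})^{n}$ with $\int_{\{v<t\}}(dd^{c}v)^{n}$; since $\{v_{j}<t\}$ only increases to $\{v<t\}$ while the currents converge merely weakly, this needs the standard comparison estimates for level sets of decreasing sequences. The genuinely delicate point is the behaviour near the unbounded locus $\{u=-\infty\}=\{v=0\}$: the Monge--Amp\`ere term $\int_{\{u_{j}<-k\}}e^{n(u_{j}+k)}(dd^{c}u_{j})^{n}$ must be shown to disappear in the limit, which amounts to proving that $(dd^{c}v)^{n}$ puts no mass on $\{v=0\}$. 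Here I would use that the weight $e^{n(u_{j}+k)}=t^{-n}v_{j}^{n}$ vanishes as $v_{j}\to0$, together with the uniform bound $v_{j}^{n}(dd^{c}u_{j})^{n}\le (dd^{c}v_{j})^{n}$, which keeps these measures of bounded mass, so that any weak limit is supported on $\{v=0\}$ and is annihilated by the vanishing weight.
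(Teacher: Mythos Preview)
Your approach is the same as the paper's: the paper gives no proof at all beyond the single sentence ``Using Theorem \ref{Sad.the}, it is easy to show that'' Proposition \ref{M4.prop} holds. Your substitution $v=e^{u}$, the algebraic identities
\[
(dd^{c}v)^{n}=v^{n}\bigl[(dd^{c}u)^{n}+n\,du\wedge d^{c}u\wedge(dd^{c}u)^{n-1}\bigr],\qquad
v(dd^{c}v)^{n}-n\,dv\wedge d^{c}v\wedge(dd^{c}v)^{n-1}=v^{n+1}(dd^{c}u)^{n},
\]
and the resulting ``master identity'' with $t=e^{-k}$ are exactly the computations that make Sadullaev's two conditions match \eqref{M4.eq1} and \eqref{M4.eq2}. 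So you are doing precisely what the paper asks, only with more detail.

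The analytic points you flag at the end are genuine but routine once one uses that $v=e^{u}$ is \emph{bounded} plurisubharmonic. For (ii)$\Rightarrow$(i): the first Sadullaev identity follows by testing $v_{j}^{n+1}(dd^{c}u_{j})^{n}$ against a cutoff, splitting at level $-M$, and using $v_{j}^{n+1}(dd^{c}u_{j})^{n}\le v_{j}(dd^{c}v_{j})^{n}$ on $\{u_{j}\le -M\}$ together with the uniform mass bound for $(dd^{c}v_{j})^{n}$; the second Sadullaev condition then reduces to \eqref{M4.eq2} plus the same tail estimate for the residual Monge--Amp\`ere piece. For (i)$\Rightarrow$(ii) you already note that the gradient integral in \eqref{M4.eq2} is dominated by $t^{-n}\int_{\{v_{j}<t\}}(dd^{c}v_{j})^{n}$, and passing between the level sets $\{v_{j}<t\}$ and $\{v<t\}$ only requires choosing $t$ outside the countable set where $(dd^{c}v)^{n}$ charges $\{v=t\}$. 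None of this goes beyond standard Bedford--Taylor convergence arguments, which is presumably why the paper says ``easy''.
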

   	Now, by using Propositions \ref{M2.prop}, \ref{M3.prop} and \ref{M4.prop}, we will prove Theorem \ref{rel.the}.
   	\begin{proof}[Proof of Theorem \ref{rel.the}]
   		Let $u\in PSH^-(\Omega)$. By replacing $\Omega$ by an exhaustive sequence of relative compact subsets of $\Omega$,
   we can assume that there exists a sequence  $u_j\in PSH^{-}(\Omega)\cap C(\Omega)$ such that $u_j$ decreasing to $u$
   in $\Omega$.
   
   	Assume that $u\in M_2PSH(\Omega)$. By Proposition \ref{M2.prop}, for any $\epsilon>0$, there exist $k_0>0$ such that
   	\begin{center}
   		$\forall k\geq k_0,\exists l_0>0: \limsup\limits_{j\to\infty}
   		\int\limits_{U\cap\{-k-\frac{1}{l}<u_j<-k+\frac{1}{l}\}}du_j\wedge d^cu_j\wedge (dd^cu_j)^{n-1}
   		<\dfrac{\epsilon}{l}, \forall l>l_0.$
   	\end{center}
   	Let $k>k_0$. By Besicovitch's covering theorem \cite{Matt95}, there are $k_1, l_1,...,k_m,l_m...>0$ such that
   	\begin{center}
   		$\limsup\limits_{j\to\infty}
   		\int\limits_{U\cap\{-k_m-\frac{1}{l_m}<u_j<-k_m+\frac{1}{l_m}\}}du_j\wedge d^cu_j\wedge (dd^cu_j)^{n-1}
   		<\dfrac{\epsilon}{l_m}, \forall m>0,$
   	\end{center}
   	and
   	\begin{center}
   		$\boldsymbol{1}_{(-k-1, -k)}\leq\sum\limits_m\boldsymbol{1}_{(-k_m-\frac{1}{l_m}, -k_m+\frac{1}{l_m})}
   		=\sum\limits_m\dfrac{2}{l_m}\leq C,$
   	\end{center}
   	where $C>0$ is a universal constant. Hence
   	\begin{center}
   		$\limsup\limits_{j\to\infty}
   		\int\limits_{U\cap\{-k-1<u_j<-k\}}du_j\wedge d^cu_j\wedge (dd^cu_j)^{n-1}
   		\leq C\epsilon.$
   	\end{center}
   	By Proposition \ref{M3.prop}, we get $u\in M_3PSH(\Omega)$.
   	
   	Thus $M_2PSH(\Omega)\subset M_3PSH(\Omega)$.
   	
   	Now, assume that $u$ is an arbitrary element of $M_3PSH(\Omega)$. By Proposition \ref{M3.prop}, for any $\epsilon>0$, 
   	there exists $k_0>0$ such that
   	\begin{center}
   		$\limsup\limits_{j\to\infty}\int\limits_{U\cap\{-k-1<u_j<-k\}}du_j\wedge d^cu_j\wedge (dd^cu_j)^{n-1}
   		<\epsilon.$
   	\end{center}
   	for every $k>k_0$. Then
   	\begin{flushleft}
   		$\limsup\limits_{j\to\infty}\int\limits_{U\cap\{u_j<-k\}}e^{n(u_j+k)}
   		du_j\wedge d^cu_j\wedge (dd^cu_j)^{n-1}$\\
   		$\leq\sum\limits_{l=0}^{\infty}\limsup\limits_{j\to\infty}\int\limits_{U\cap\{-k-l-1<u_j<-k-l\}}
   		e^{n(u_j+k)}
   		du_j\wedge d^cu_j\wedge (dd^cu_j)^{n-1}$\\
   		$\leq\sum\limits_{l=0}^{\infty}\limsup\limits_{j\to\infty}\int\limits_{U\cap\{-k-l-1<u_j<-k-l\}}
   		e^{-nl}
   		du_j\wedge d^cu_j\wedge (dd^cu_j)^{n-1}$\\
   		$\leq\epsilon\sum\limits_{l=0}^{\infty}e^{-nl},$
   	\end{flushleft}
   	for any $k>k_0$. Hence 
   	\begin{center}
   			$\limsup\limits_{j\to\infty}\int\limits_{U\cap\{u_j<-k\}}e^{n(u_j+k)}
   			du_j\wedge d^cu_j\wedge (dd^cu_j)^{n-1}
   			\stackrel{k\to\infty}{\longrightarrow}0.$
   	\end{center}
   	By Proposition \ref{M4.prop}, we get $u\in M_4PSH(\Omega)$.
   	
   	Thus $M_3PSH(\Omega)\subset M_4PSH(\Omega)$.
   	
   	Conversely, if $u\in M_4PSH(\Omega)$ then
   	\begin{flushleft}
   		$\limsup\limits_{j\to\infty}\int\limits_{U\cap\{-k-1<u_j<-k\}}du_j\wedge d^cu_j\wedge (dd^cu_j)^{n-1}$\\
   		$\leq e^n\limsup\limits_{j\to\infty}\int\limits_{U\cap\{-k-1<u_j<-k\}}e^{n(u_j+k)}
   		du_j\wedge d^cu_j\wedge (dd^cu_j)^{n-1}$\\
   		$\leq e^n\limsup\limits_{j\to\infty}\int\limits_{U\cap\{u_j<-k\}}e^{n(u_j+k)}
   		du_j\wedge d^cu_j\wedge (dd^cu_j)^{n-1}$\\
   		$\stackrel{k\to\infty}{\longrightarrow} 0.$
   	\end{flushleft}
   	
   	Hence $M_4PSH(\Omega)\subset M_3PSH(\Omega)$.
   	
   	The proof is completed.
   	\end{proof}

\end{document}